\newcommand\NN{\mathbb{N}} % Ensemble des entiers naturels.
\newcommand\RR{\mathbb{R}} % Ensemble des nombres réels.
\newcommand\CC{\mathbb{C}} % Ensemble des nombres complexes.
\newcommand\PP{\mathbb{P}} % proba
\newcommand\EE{\mathbb{E}} % esperance
\newtheorem{theorem}{Theorem}[section]%
\newtheorem{lemma}[theorem]{Lemma}%
\author{Rapha\"{e}l Butez\footnote{CEREMADE, UMR CNRS 7534 Universit\'{e} Paris-Dauphine, PSL Research university, Place du Mar\'{e}chal de Lattre de Tassigny 75016 Paris, FRANCE.
Partially supported
by an ANR grant as part of the program "Investissements Avenir" 
ANR-10-LABX-0098 supported by the Fondation Sciences Math\'{e}matiques de Paris. }, Ofer Zeitouni\footnote{Department of Mathematics, Weizmann Institute of Science, POB 26, Rehovot 76100, Israel
and  Courant Institute, New York University, 251 Mercer Street, New York, NY 10012, USA. Partially supported by an Israel Science Foundation grant.}}
\title{Universal large deviations for Kac polynomials}
\begin{document} % Début du document proprement dit.

\maketitle % Crée le bandeau de titre

\begin{abstract}
We prove the universality of the large deviations principle for the 
empirical measures of zeros of 
random polynomials whose coefficients are i.i.d. random variables possessing 
a density with respect to the Lebesgue measure on $\CC$, $\RR$ or $\RR^+$, 
under the assumption that the density does not vanish too fast at zero and decays at least as $\exp -|x|^{\rho}$, $\rho>0$, at infinity.
\end{abstract}

\section{Introduction and statement of the main result}
Consider random polynomials of the form:
\begin{equation}
	\label{eq-poly}
	P_n(z)= \sum_{k=0}^{n}a_k z^k= a_n\prod_{i=1}^n (z-z_i)
\end{equation}
where $a_0, \dots, a_n$ are i.i.d. random variables and $z_1,\ldots,z_n$
are the complex zeros of $P_n$.  (Such random polynomials are often referred to
as \textit{Kac polynomials}.)
There is a rich literature about the behavior of the zeros of $P_n$ 
and we refer to \cite{taovu} for a nice recent review of the subject. 
An important aspect of the theory is universality. For example, 
introduce the empirical measure of zeros:
\begin{equation*}
\mu_n := \frac{1}{n} \sum_{k=1}^{n} \delta_{z_k}.
\end{equation*}
Then,
 Ibragimov and Zaporozhets in \cite{ibragimovzaporozhets} showed that
$(\mu_n)_{n \in \NN}$ converges weakly
to the $\nu_{S^1}$, the uniform measure on the unit circle,
if and only if $\EE(\log(1+|a_0|))< \infty$; that is, the limit $\mu_n$ is 
(modulus technical conditions) universal. Other universal properties include
rescaled limits for $\mu_n$, see
\cite{ibragimovzeitouni}, correlation functions for the point process of zeros \cite{taovu}, and more.

Our interest in this note is in large deviations for the sequence $\mu_n$ in the
space 
$\mathcal{M}_1(\CC)$ equipped with the topology of weak convergence, which makes
it into a Polish space. In case the coefficients $(a_i)$ are i.i.d. standard
complex random variables, Zeitouni and Zelditch\footnote{In fact, \cite{zeitounizelditch} work in $\mathbb{CP}^1$ and consider more general 
ensembles of holomorphic polynomial with Gaussian coefficients, but it is not
hard
to check that their result, when specialized to Kac polynomials with complex Gaussian coefficients, is equivalent to the one here; this is implicitly stated in
\cite{zeitounizelditch} and explicitely
checked in \cite{butez}.}
proved in \cite{zeitounizelditch} that the sequence of empirical measures of zeros 
(which we denote by 
$\mu_n^{\CC}$ for this model)
satisfies the large deviations principle (LDP) in $\mathcal{M}_1(\CC)$ 
with speed $n^2$ and good rate function $I_{\CC}$ defined by
\begin{align*}
I_{\CC}(\mu) & =  -\iint \left( \log|z-w|-\frac{1}{2}\log(1+|z|^2)-\frac{1}{2}\log(1+|w|^2)\right)d\mu(z)d\mu(w) \\ 
& \qquad \qquad +  \sup_{z \in S^1} \int \left( \log|z-w|^2-\log(1+|w|^2)\right)d\mu(w).
\end{align*}
When $\displaystyle \int \log(1+|z|^2)d\mu(z)$ is finite, 
it simplifies to:
\[I_{\CC}(\mu)=  -\iint \log|z-w|d\mu(z)d\mu(w) + \sup_{z \in S^1} \int \log|z-w|^2d\mu(w).\] 
This has been extended by Butez \cite{butez}
to the case of real-valued i.i.d. standard
Gaussians $(a_i)$:
the empirical measure of zeros, denoted 
$\mu_n^{\RR}$ for that model, 
satisfies the LDP  in $\mathcal{M}_1(\CC)$ 
with speed $n^2$ and good rate function $I_{\RR}$ defined by
%\begin{equation}
\[I_{\RR}(\mu)= \begin{cases}
 \frac{1}{2} I_{\CC}(\mu) &\text{if $\mu$ is invariant under } z\mapsto \bar{z} \\
 \infty &\text{otherwise.}
\end{cases}
\]
%\end{equation}  
Finally, when
the coefficients $(a_i)$ are i.i.d. standard exponential
random variables, Ghosh and Zeitouni proved in \cite{ghoshzeitouni} 
that the sequence of empirical measures  of zeros, denoted by
$\mu_n^{\RR^+}$, 
satisfies the  LDP  in 
$\mathcal{M}_1(\CC)$ with speed $n^2$ and good rate function 
$I_{\RR^+}$ defined by:
%\begin{equation}
\[I_{\RR^+}(\mu)= \begin{cases}
 \frac{1}{2} I_{\CC}(\mu) & \text{ if } \mu \in \bar{\mathcal{P}} \\
  \infty & \text{ otherwise.}
\end{cases}
\]
%\end{equation}
where $\mathcal{P}$ is the set of empirical measures of zeros
of polynomials with positive coefficients and 
$\bar{\mathcal{P}}$ is its closure for the weak topology.
(An explicit description of $\mathcal{P}$ is provided in \cite{bergweilereremenko}.)

Apart for the models described above, 
to our knowledge no other LDPs for the
empirical measure of zeros of Kac polynomials appear in the literature.
(In a different direction, Zelditch \cite{zelditch} extended the results
of \cite{zeitounizelditch} to the case of Riemann surfaces, and
Feng and Zelditch \cite{feng} 
studied 
some cases of non-i.i.d. 
coefficients 
in the context of
more general $P(\phi)_2$ random polynomials.)  

Our main result concerns the universality of the above large deviation
principles, under mild technical conditions.
\begin{theorem}\label{maintheorem}
Let $E$ be $\CC$, $\RR$ or $\RR^+$. Let $a_0, \dots, a_n$ be i.i.d. 
random variables with a density $g$ with respect to the Lebesgue 
measure on $E$. Assume that:
\begin{enumerate}
	\item
There exist $\rho >0$, $r>0$ and $R>0$ such that 
\begin{equation}\label{sup}
\forall z \in \CC, \quad g(z) \leq \exp(-r|z|^{\rho} + R),
\end{equation}
\item
There exits $\delta>0$ such that for all $\lambda >0$: 
\begin{equation}\label{inf}
\int 1_{|x|\leq \delta}\frac{1}{g(x)^{\lambda}}d\ell_E(x) < \infty
\end{equation}
\end{enumerate}
Then the sequence of empirical measures $(\mu_n)_{n \in \NN}$ satisfies a large deviations principle in $\mathcal{M}_1(\CC)$ 
with speed $n^2$ and rate function $I_{E}$.
\end{theorem}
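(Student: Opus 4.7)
The plan is to follow the pattern of the Gaussian and exponential cases in \cite{zeitounizelditch,butez,ghoshzeitouni}: perform the change of variables from coefficient space to leading coefficient plus roots via $a_k=a_n(-1)^{n-k}e_{n-k}(z_1,\ldots,z_n)$, then apply Laplace-type asymptotics at speed $n^2$. For $E=\CC$, the joint density of $(a_n,z_1,\ldots,z_n)$ is
\[
\rho_n(a_n,z_1,\ldots,z_n)=g(a_n)\,|a_n|^{2n}\,|\Delta(z)|^2\prod_{k=0}^{n-1}g\bigl(a_n b_k(z)\bigr),\qquad b_k(z):=(-1)^{n-k}e_{n-k}(z),
\]
where $b_0(z),\ldots,b_n(z)$ are the coefficients of the monic polynomial $\prod_i(w-z_i)$. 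The cases $E=\RR$ (roots come in complex-conjugate pairs) and $E=\RR^+$ (coefficients constrained to be positive) use the analogous change of variables and naturally account for the extra restrictions present in $I_\RR$ and $I_{\RR^+}$.

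For the upper bound, hypothesis (1) yields $\sum_k\log g(a_n b_k(z))\le -r|a_n|^\rho S_\rho(z)+nR$ with $S_\rho(z):=\sum_k|b_k(z)|^\rho$. Optimizing $-r|a_n|^\rho S_\rho(z)+2n\log|a_n|$ in $|a_n|$ gives a maximum $-(2n/\rho)\log S_\rho(z)+O(n\log n)$, attained at $|a_n|^*=\bigl(2n/(r\rho S_\rho(z))\bigr)^{1/\rho}$. The crucial identification is that by standard inequalities
\[
\max_k|b_k(z)|\;\le\;\sup_{|w|=1}\Bigl|\prod_i(w-z_i)\Bigr|\;\le\;(n+1)\max_k|b_k(z)|,
\]
so $\log S_\rho(z)=\rho n\sup_{|w|=1}\int\log|w-z|\,d\mu_n(z)+O(\log n)$. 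The factor $\rho$ cancels, and combined with the Vandermonde contribution $2\sum_{i<j}\log|z_i-z_j|\sim n^2\iint\log|z-w|\,d\mu_n(z)\,d\mu_n(w)$, one obtains on measures of finite energy
\[
\frac{1}{n^2}\log\rho_n\le \iint\log|z-w|\,d\mu_n\,d\mu_n -2\sup_{|w|=1}\int\log|w-z|\,d\mu_n+o(1)=-I_\CC(\mu_n)+o(1),
\]
with the compactifying $\tfrac12\log(1+|z|^2)$ terms added to handle mass escaping to infinity (exponential tightness at speed $n^2$ follows from (1)). Since $r,\rho,R$ disappear from the leading order, universality is essentially built into the asymptotics.

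For the matching lower bound I would localize around a finite-energy target $\mu$. Assumption (2), combined with Markov's inequality, implies that $|\{g<\eta\}\cap\{|x|\le\delta\}|$ decays faster than any power of $\eta$, so for some $\eta_0>0$ there is a compact set $K\subset\{g\ge\eta_0\}\cap\{|x|\le\delta\}$ of positive Lebesgue measure. Approximating $\mu$ by discrete empirical measures $\tfrac1n\sum\delta_{z_i^{(n)}}$ chosen so that for $a_n=|a_n|^*$ each of the values $a_n b_k(z^{(n)})$ lies (after rescaling) inside $K$, one obtains $\rho_n\ge \eta_0^{n+1}|a_n|^{2n}|\Delta|^2$ up to constants, and integrating over a small polydisk in $(a_n,z_1,\ldots,z_n)$ yields a matching lower bound on $\PP_g(\mu_n\in G)$ for any open $G\ni\mu$.

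The main obstacle is coordinating these localizations: one must simultaneously tune $|a_n|^*$, force the $n$ elementary-symmetric values $a_n b_k$ into favorable windows for $g$, and preserve the Vandermonde repulsion, all while keeping $\mu_n$ close to $\mu$. Building such discretizations robustly --- in particular for targets $\mu$ with mass near infinity (requiring the $\tfrac12\log(1+|z|^2)$ compactification to be handled compatibly with the tail bound (1)), and for the real and positive cases where the coefficient space is of strictly positive codimension, so that constructing admissible discrete approximations (respecting conjugation symmetry, or the positivity constraint characterizing $\bar{\mathcal{P}}$) requires more care --- is the technical heart of the proof. The saddle-point analysis itself is essentially routine once this combinatorial coordination is in place.
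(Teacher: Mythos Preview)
Your upper-bound sketch is in the right spirit but more laborious than the paper's; your lower bound has a real gap.

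The paper does \emph{not} redo the Laplace/variational analysis. Its whole point is to sandwich the zero density between $e^{o(n^2)}$ multiples of the Gaussian (or exponential) zero density and then simply quote the already-known LDPs for those models. For the upper bound, after the same change of variables you use, one writes $\sum_k|a_k|^\rho=|a_n|^\rho\|\tilde a\|_\rho^\rho$ with $\tilde a=(b_0,\ldots,b_{n-1},1)$ depending only on the roots, substitutes $u=a_n\|\tilde a\|_\rho$, and factors the $a_n$-integral \emph{exactly} --- no saddle point is needed. This yields the zero density $\le C_n\prod_{i<j}|z_i-z_j|^2/\|\tilde a\|_\rho^{2n+2}$, and the norm inequality $\|\tilde a\|_\rho\ge(n+1)^{-|1/2-1/\rho|}\|\tilde a\|_2$ converts this into $c_n$ times the Gaussian density \eqref{gaussiencomplexe} with $\log c_n=O(n\log n)$. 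Hence $\PP(\mu_n\in A)\le c_n\,\PP(\mu_n^\CC\in A)$ for every Borel $A$, and the upper bound is immediate from the Gaussian LDP. Your comparison $\max_k|b_k|\asymp\sup_{|w|=1}\bigl|\prod_i(w-z_i)\bigr|$ is the same norm-equivalence step in disguise, but you then rebuild $I_\CC$ from scratch rather than invoking the Gaussian result.

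The lower bound is where the approaches diverge, and yours does not close. Your plan requires all $n+1$ values $a_nb_k(z^{(n)})$ to lie in a fixed compact $K\subset\{g\ge\eta_0\}\cap\{|x|\le\delta\}$. Hypothesis~(2) does not force $g$ to be bounded below on the whole disk, so in general $K$ must be bounded away from $0$; the constraint then forces $\max_k|b_k|/\min_k|b_k|$ to be bounded by a fixed constant, whereas for a generic target $\mu$ this ratio is exponential in $n$, so no admissible $a_n$ exists. (Even your own saddle $|a_n|^*\sim n^{1/\rho}/\max_k|b_k|$ puts the largest $|a_nb_k|$ at order $n^{1/\rho}$, outside any bounded $K$.) You correctly flag this coordination as ``the technical heart'' but do not resolve it. The paper bypasses it entirely via Lemma~\ref{technicalLemma}: a Chernoff bound using hypothesis~(2) shows that $\{\|a\|_\infty<\delta,\ \prod_k g(a_k)<e^{-\varepsilon n^2}\}$ has Lebesgue measure $\le e^{-Kn^2}$ for any $K$, whence
\[
\PP(\mu_n\in A)\ \ge\ e^{-\varepsilon n^2}\int 1_{\{\mu_n\in A\}}\,1_{\{\|a\|_\infty<\delta\}}\,d\ell_{E^{n+1}}(a)\ -\ e^{-(K+\varepsilon)n^2}.
\]
The remaining integral is (up to $e^{O(n)}$) the probability for i.i.d.\ \emph{uniform} coefficients on $D(0,\delta)$; one more exact substitution $u=a_n\|\tilde a\|_\infty$ together with $\|\tilde a\|_\infty\le\|\tilde a\|_2$ bounds this below by $e^{o(n^2)}\,\PP(\mu_n^\CC\in A)$. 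No discrete approximation of $\mu$ is ever constructed; the lower bound, like the upper bound, is a two-line reduction to the Gaussian (or exponential) model.
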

The second assumption in Theorem 
\ref{maintheorem}
means that either the density $g$ does not vanish around zero or, 
if it vanishes, $g$ is greater than any $|x|^a$ in a neighborhood of zero.

Before describing the (simple)
ideas behind the proof of Theorem \ref{maintheorem}, we
explain some of the background and why we find the theorem somewhat surprising.
The proof of the LDPs  in the Gaussian and Exponential 
cases is based on an explicit expression for the joint distributions of zeros, 
that we review below. Given that expression,
the proofs of the LDP follow (with some detours) 
a track well explored in the case of the empirical measure of eigenvalues of
random matrices.
For the latter,
large deviations have been extensively studied, initially by Ben Arous and Guionnet \cite{benarousguionnet}, Ben Arous and Zeitouni \cite{benarouszeitouni} 
and Hiai and Petz \cite{hiaipetz}. Recently, the large deviations for the 
empirical measure were proved for Wigner matrices with entries possessing heavier-than-Gaussian tails 
by Bordenave and Caputo \cite{bordenavecaputo}, with a rate function 
depending on the tail of the entries. Very similar results were obtained
by Groux \cite{groux} for Wishart matrices.
In particular, it follows from these results that
in the random matrix setup, the rate function is known to
not be universal; this is in sharp contrast with Theorem 
\ref{maintheorem}.

As mentioned above, the LDP for Kac polynomials in the Gaussian
and exponential coefficients cases begin with an explicit expression 
for the density of zeros, which we now explain. We concentrate 
first on the case of complex Gaussian coefficients. Note that
the second equality in \eqref{eq-poly} gives an $n!$-to-1 map between
$(a_n,z_1,\ldots,z_n)$ and
$(a_0,\ldots,a_n)$.  A classical computation of the Jacobian followed by integration over $a_n$, see
e.g. 
\cite{bogomolny}, \cite{butez},\cite{forrester}, \cite{zeitounizelditch},
shows that the distribution of $(z_1,\dots,z_n)$ possesses 
a density with respect to the Lebesgue measure
$d\ell_{\CC^n}$ on $\CC^n$
given by:
\begin{equation} \label{gaussiencomplexe}
\frac{n!}{\pi^n} \frac{\prod_{i<j}|z_i-z_j|^2}{\left(\displaystyle \int \prod_{k=1}^{n}|z-z_k|^2 d\nu_{S^1}(z)\right)^{n+1}}
= \frac{n!}{\pi^n} \frac{\prod_{i<j}|z_i-z_j|^2}{\|\tilde{a}\|_2^{2n+2}} 
\end{equation}
where $\tilde{a}=(a_0/a_n,\dots,a_{n-1}/a_n,1)$ is a continuous 
function of $(z_1,\ldots,z_n)$ given explicitely by Vieta's formula. 

In the case of real Gaussian coefficients,
the probability of having $k$ real zeros is positive for $k$ having the same
parity of $n$. Following
a computation  of Zaporozhet in \cite{zaporozhets}, one obtains 
that the distribution of the roots of $P_n$ is given by:
\begin{align*}
\sum_{k=0}^{\lfloor n/2 \rfloor} \frac{2^k \Gamma(\frac{n+1}{2})}{k!(n-2k)!\pi^{(n-1)/2}} & \frac{ \prod_{i<j}|z_i-z_j|}{ (\int\prod_{i=1}^n |z-z_i|^2 d\nu_{S^1})^{(n+1)/2}} d\ell_{n,k}(z_1,\dots,z_n) \\ 
&\quad =  \sum_{k=0}^{\lfloor n/2 \rfloor} \frac{2^k \Gamma(\frac{n+1}{2})}{k!(n-2k)!\pi^{(n-1)/2}}  \frac{ \prod_{i<j}|z_i-z_j|}{ \| \tilde{a}\|_2^{n+1}} d\ell_{n,k}(z_1,\dots,z_n)
\end{align*}
where \begin{equation}
d\ell_{n,k}(z_1,\dots,z_n)= d\ell_{\RR}(z_1) \dots   d\ell_{\RR}(z_{n-2k}) d\ell_{\CC}(z_{n-k})  \dots  d\ell_{\CC}(z_n).
\end{equation}
Note that the $k$-th term of the mixture corresponds to the case where $P_n$ has $n-2k$ real roots.  

Finally, in the case of positive exponential real coefficients,
the distribution of the vector of the zeros is given by:
\begin{align*}
\sum_{k=0}^{\lfloor n/2 \rfloor} \frac{2^k n!}{k!(n-2k)!}& \frac{ \prod_{i<j}|z_i-z_j|}{ (\prod_{i=1}^n |1-z_i|)^{(n+1)}} d\ell_{n,k}(z_1,\dots,z_n) \\
& = \sum_{k=0}^{\lfloor n/2 \rfloor} \frac{2^k n!}{k!(n-2k)!} \frac{ \prod_{i<j}|z_i-z_j|}{ \|\tilde{a}\|_1^{n+1}} d\ell_{n,k}(z_1,\dots,z_n).
\end{align*}

\noindent \textbf{Main idea of the proof of Theorem \ref{maintheorem}.}
We will prove the universality of the LDP
by comparing the distributions of the vectors of the zeros in the different models. 
Assume one could
find two sequences $(b_n)_{n \in \NN}$ and $(c_n)_{n \in \NN}$ satisfying 
\begin{equation}
\lim_{n \to \infty} \frac{1}{n^2}\log b_n = \lim_{n \to \infty} \frac{1}{n^2} \log c_n = 0 
\end{equation}
and two probability densities on $\CC^{n}$, $F_n$ and $G_n$ satisfying:
\begin{equation}\label{encadrement}
\forall (z_1, \dots, z_n)\in \CC^{n} \qquad b_nF_n(z_1, \dots, z_n) \leq p(z_1,\dots,z_n) \leq c_n G_n(z_1,\dots,z_n)
\end{equation}
such that, under the distribution given by $F_n$ or $G_n$, 
the sequence of empirical measures $(\mu_n^{F_n})_{n\in \NN}$ and 
$(\mu_n^{G_n})_{n \in \NN}$ satisfies a LDP
in $\mathcal{M}_1(\CC)$, with speed $n^2$ and the same rate function $I$.
Then, the sequence $(\mu_n)_{n \in \NN}$ satisfies a LDP
in $\mathcal{M}_1(\CC)$ with speed $n^2$ and rate function $I$, since
for any set $A$, $\PP(\mu_n \in A)$ is an integral with respect 
to the distribution of the zeros and therefore
one can use the 
bounds \eqref{encadrement} to obtain the LDP.

In practice, we will obtain \eqref{encadrement} by noting that
if the joint distribution of the coefficients is a function of a norm 
$\|.\|$ of the vector of the coefficients, 
the distribution of the zeros is roughly of the form:
\[ \frac{\prod_{i<j}|z_i-z_j|^2}{\| \tilde{a} \|^{2n+2}} .\] 
If $\|. \|$ can be compared with $\|.\|_2$ with nice constants, 
we can can relate the density of zeros by one that is closer 
to the Gaussian or exponential cases in the spirit of \eqref{encadrement}.
For i.i.d. variables, the first hypothesis of the theorem is used 
to replace the joint distribution of the coefficients 
by a function of $\| a \|_{\rho}$ 
and then we prove the upper bound for the latter distribution. 
The second hypothesis means that, for the lower bound, we can replace 
the joint distribution of $a$ by a $1_{\|a \|_{\infty} \leq \delta}$ 
which is also a function of a norm.

We conclude this introduction by stating and proving 
a technical lemma that will
be used in the proof of the LDP lower bound.
\begin{lemma}\label{technicalLemma} Let $E$ be $\CC$, $\RR$ or $\RR^+$. 
Let $X_0, \dots, X_n$ be i.i.d. random variables, 
uniformly distributed on the disk of center $0$ and radius $\delta$ of $E$. 
Assume that there exits $\delta>0$ such that for all 
$\lambda >0$, \begin{equation}
	\label{eq-clambda}
c(\lambda):=\int 1_{|x|\leq \delta}\frac{1}{g(x)^{\lambda}}d\ell_E(x) < \infty.
\end{equation}
Then, for any $K>0$ and $\varepsilon>0$
there exists $n_0=n_0(K,\delta,\varepsilon)$ such that for all $n>n_0$,
\begin{align}
     \int 1_{\{\prod_{k=0}^{n}g(a_i)\leq e^{-\varepsilon n^2}\}}   1_{\| a \|_{\infty}< \delta} d\ell_{E^n}(a_0,\dots,a_n)  \leq e^{-Kn^2}.
\end{align}
\end{lemma}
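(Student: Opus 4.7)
The plan is to treat this as a standard Chernoff-type (exponential Markov) estimate on the sum of independent random variables $-\log g(X_k)$, exploiting the integrability assumption \eqref{eq-clambda} as a bound on the moment generating function.

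First I would rewrite the integrand using the elementary bound
\[
 1_{\{\prod_{k=0}^{n} g(a_k) \leq e^{-\varepsilon n^2}\}}
 \;\leq\; e^{-\lambda \varepsilon n^2}\prod_{k=0}^{n} g(a_k)^{-\lambda},
\]
which holds for every $\lambda>0$ on the indicated event because there $\prod_k g(a_k)^{-\lambda}\geq e^{\lambda \varepsilon n^2}$. Plugging this into the integral and using the $\|a\|_\infty<\delta$ constraint to reduce each factor to an integral over $\{|x|<\delta\}$, Fubini yields
\[
 \int 1_{\{\prod_k g(a_k)\leq e^{-\varepsilon n^2}\}} 1_{\|a\|_\infty<\delta}\, d\ell_{E^n}(a)
 \;\leq\; e^{-\lambda \varepsilon n^2}\,\prod_{k=0}^{n}\int 1_{|x|\leq \delta}\,g(x)^{-\lambda}\,d\ell_E(x)
 \;=\; e^{-\lambda \varepsilon n^2}\, c(\lambda)^{\,n+1}.
\]
(Equivalently, in the probabilistic language of the statement, this is the usual Chernoff bound $\mathbb{P}\bigl(\sum_k -\log g(X_k)\geq \varepsilon n^2\bigr)\leq e^{-\lambda\varepsilon n^2}\mathbb{E}[g(X_0)^{-\lambda}]^{n+1}$, multiplied by the volume factor of the disk.)

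Then I would optimize $\lambda$. Given $K$ and $\varepsilon$, simply choose $\lambda := 2K/\varepsilon$, which is admissible since \eqref{eq-clambda} gives $c(\lambda)<\infty$ for every $\lambda>0$. With this choice $\lambda\varepsilon - K = K$, and the bound above becomes
\[
 e^{-K n^2}\cdot \exp\!\bigl((n+1)\log c(\lambda)\bigr).
\]
Since $(n+1)\log c(\lambda) = o(n^2)$ (this is linear in $n$ regardless of the sign of $\log c(\lambda)$), there exists $n_0=n_0(K,\delta,\varepsilon)$, depending on $c(2K/\varepsilon)$ and hence only on $K,\varepsilon,\delta$, such that for $n>n_0$ the whole expression is at most $e^{-Kn^2}$, as required.

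There is no serious obstacle here: the estimate is exactly the quadratic-speed Chernoff bound, and the only input is the uniform-in-$\lambda$ finiteness of $c(\lambda)$, which is precisely hypothesis~\eqref{inf}. The mild point to keep in mind is that one needs $\lambda$ to scale with $K/\varepsilon$, but since $c(\lambda)$ is allowed to be arbitrarily large (it need not be bounded in $\lambda$), the $n^2$ speed on the left absorbs the linear $(n+1)\log c(\lambda)$ term for $n$ large enough, so no finer information on the growth of $c(\lambda)$ is needed.
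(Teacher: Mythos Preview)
Your approach is exactly the paper's: a Chernoff/exponential Markov bound using the integrability hypothesis \eqref{eq-clambda}, followed by choosing $\lambda$ large and then $n_0$ large. The only slip is arithmetic: with $\lambda=2K/\varepsilon$ the bound $e^{-\lambda\varepsilon n^2}c(\lambda)^{n+1}$ equals $e^{-2Kn^2}\exp\bigl((n+1)\log c(\lambda)\bigr)$, not $e^{-Kn^2}\exp\bigl((n+1)\log c(\lambda)\bigr)$; with the correct exponent the comparison to $e^{-Kn^2}$ goes through exactly as you say, since the extra $e^{-Kn^2}$ absorbs the linear term $(n+1)\log c(\lambda)$ for large $n$.
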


\begin{proof}[Proof of Lemma \ref{technicalLemma}]
%	We provide the proof in case $E=\CC$, the other cases are similar.
	Fix $K>0$, micking the proof of Chernoff's inequality, we have:
\begin{align*}
\int  1_{\{\prod_{k=0}^{n}  g(a_i)\leq e^{-\varepsilon n^2}\}}  &1_{\| a \|_{\infty}< \delta}  d\ell_{E^{n+1}}(a_0,\dots,a_n) 
\\ &= \int  1_{\{\prod_{k=0}^{n}  g(a_i)^{-\lambda}\geq e^{\lambda\varepsilon n^2}\}}  1_{\| a \|_{\infty}< \delta}  d\ell_{E^{n+1}}(a_0,\dots,a_n)
\\ & \leq e^{-\lambda \varepsilon n^2}\int \prod_{k=0}^{n}\left[g(a_k)^{-\lambda} 1_{|a_k|<\delta}\right] d\ell_{E^{n+1}}(a_0,\dots,a_n)
\\ & \leq e^{-\lambda \varepsilon n^2} e^{(n+1) c(\lambda)}.
\end{align*}
The proof is completed 
by taking $\lambda$ large enough so that $\lambda \varepsilon>K$ and then taking $n_0$ large enough so that
 $n! e^{-\lambda \varepsilon n^2} e^{(n+1) c(\lambda)} \leq e^{-Kn^2}$ for all $n>n_0$.
\end{proof}

\section{Proof of Theorem \ref{maintheorem}}
The proof of the main theorem is made in two steps: we start by 
proving the theorem when the coefficients are complex, 
and then we treat the real and the positive case. 
The proof of the three cases are very similar, 
the arguments and ideas are exactly the same.
\begin{proof}[Proof of Theorem \ref{maintheorem}]
\textbf{Complex coefficients.} \newline
Recall that the density of 
the distribution of
the random vector of zeros $(z_1,\dots,z_n)$ (taken at random uniform order)
with respect to
$\ell_{\CC^{n}}$
is given by
\[p(z_1,\dots,z_n)=\int \prod_{i<j}|z_i-z_j|^2 |a_n|^{2n} 
g(a_0)\dots g(a_n)  d\ell_{\CC}(a_n)
\]
where the $a_j$'s are seen as functions of $z_1$, \dots, $z_n$ and $a_n$ using
Vieta's formula. 
See e.g. \cite[Lemma 1.1.1 p3]{houghtkris} for a proof of this classical result. 
\newline
\textbf{Upper Bound.}
Using the inequality \eqref{sup}, we obtain:
\begin{align*}
p(z_1,\dots,z_n) & \leq \int \prod_{i<j}|z_i-z_j|^2 |a_n|^{2n} \exp(-r \sum_{k=0}^{n} |a_k|^{\rho}) e^{(n+1)R} d\ell_{\CC}(a_n) 
%d\ell_{\CC^n}(z_1 \dots, z_n).
\end{align*}
For a vector $b=(b_0,\ldots,b_n)$ and $\rho>0$, set $\|b\|_\rho= 
(\sum_{i=0}^n |b_i|^\rho)^{1/\rho}$.
Then,
\begin{equation*}
\int \prod_{i<j}|z_i-z_j|^2 |a_n|^{2n} \exp(-r \sum_{k=0}^{n} |a_k|^{\rho})d\ell_{\CC}(a_n) = \int \prod_{i<j}|z_i-z_j|^2 |a_n|^{2n} \exp(-r |a_n|^{\rho}\|\tilde{a}\|_{\rho}^{\rho})d\ell_{\CC}(a_n)
\end{equation*}
where $\tilde{a}=(a_0/a_n, \dots, a_{n-1}/a_n,1)$. We note
that $\tilde{a}$ only depends on the zeros and not on $a_n$, 
so we can compute the last
integral using the change of variables $u=a_n \|\tilde{a}\|_{\rho}$ to obtain:
\begin{align*}
	p(z_1,\dots,z_n) \leq 
	e^{(n+1)R}
	\frac{\prod_{i<j}|z_i-z_j|^2}{\|\tilde{a}\|_{\rho}^{2n+2}} \int |u|^{2n} e^{-r |u|^{\rho}} d\ell_{\CC}(u).
\end{align*}
Finally, using the classical inequalities on $\CC^{n+1}$:
\begin{align*}
 \text{if } \rho > 2,& \quad \| . \|_{\rho} \geq \frac{1}{n^{1/2-1/\rho}} \| . \|_2 \\
 \text{if } \rho \leq 2, & \quad \| . \|_{\rho} \geq  \| . \|_2
\end{align*}
we obtain that there exists a sequence $(\gamma_n)_{n \in \NN}$ such that, for
any $\rho>0$,
\begin{equation}\label{gamma}
 \| . \|_{\rho} \geq \gamma_n \| . \|_2 \quad \text{and } \quad \lim_{n \to \infty}\frac{1}{n}\log \gamma_n = 0.
\end{equation}
Using this inequality we get
\begin{align*}
p(z_1,\dots,z_n)& \leq 
	e^{(n+1)R}
\frac{\prod_{i<j}|z_i-z_j|^2}{\|\tilde{a}\|_{2}^{2n+2}} \frac{1}{\gamma_n^{2n+2}} \int |u|^{2n} e^{-r |u|^{\rho}} d\ell_{\CC}(u)  \\
& \leq 
	e^{(n+1)R}
\frac{n!}{\pi^{n+1}}\frac{\prod_{i<j}|z_i-z_j|^2}{\|\tilde{a}\|_{2}^{2n+2}} \times \frac{\pi^{n+1}}{n! \gamma_n^{2n+2}} \int |u|^{2n} e^{-r |u|^{\rho}} d\ell_{\CC}(u) .
\end{align*}
The first term of the product is the distribution 
$\mu_n^{\CC}$, see
\eqref{gaussiencomplexe}, and thanks to \eqref{gamma} we have
\[c_n:=
	e^{(n+1)R}
\frac{\pi^{n+1} }{n!\gamma_n^{2n+2}} \int |u|^{2n} e^{-r |u|^{\rho}} d\ell_{\CC}(u)=e^{O(n\log n)} . \]

Let $A \subset \mathcal{M}_1(\CC)$ be a Borel set. 
%We want to prove that:
%\begin{equation}
%\lim_{n \to \infty} \frac{1}{n^2}\log \PP(\mu_n \in A) \leq - \inf_{\textbf{clo}A} I.
%\end{equation}
%We recall that, when the coefficients are Gaussian, we write $\mu_n^{\CC}$ the empirical measure associated to this model. The upper bound follows from:
Then,
\begin{align*}
\frac{1}{n^2}\log \PP(\mu_n \in A) =& \frac{1}{n^2}\log \int 1_{\mu_n \in A} p(z_1,\dots, z_n)d\ell_{\CC^n}(z_1,\dots,z_n) \\ \leq&  \frac{1}{n^2}\log \int 1_{\mu_n \in A} \frac{n!}{\pi^{n+1}} \frac{\prod_{i<j}|z_i-z_j|^2}{\left( \int \prod_{k=1}^{n}|z-z_k|^2 d\nu_{S^1}(z)\right)^{n+1}} d\ell_{\CC^n}(z_1, \dots , z_n) + \frac{\log c_n}{n^2}\\
= & \frac{1}{n^2}\log \PP(\mu_n^{\CC} \in A) + \frac{\log c_n}{n^2}.
\end{align*}
Therefore, using the LDP upper bound for $\mu_n^{\CC}$, we complete the proof
of the upper bound by noting that 
\[
	\limsup_{n \to \infty} \frac{1}{n^2}\log \PP(\mu_n \in A) \leq - 
	\inf_{\textbf{clo}A} I_\CC.
\]
\newline
\textbf{Lower Bound.} First, we show that the technical lemma allows us to reduce the problem to the proof of the lower bound for i.i.d. $(a_i)$, with uniform distribution on the disk $D(0,\delta)$.
Let $A\subset \mathcal{M}_1(\CC)$ be a Borel set with
$\inf_{\mathrm{int} A} I_{\CC} < \infty$, fix
$K > \inf_{\mathrm{int} A} I_{\CC} $ and $\varepsilon>0$ then, 
thanks to Lemma \ref{technicalLemma} there exists $n_0$ such that for any $n>n_0$: 
\begin{align} \label{eq-align}
\PP(\mu_n \in A) & = \int 1_{\mu_n \in A} \prod_{k=0}^n g(a_k) d\ell_{\CC}(a_0)\dots d\ell_{\CC}(a_n) \nonumber\\
& \geq \int 1_{\{\prod_{k=0}^{n}g(a_i)\geq e^{-\varepsilon n^2}\}}1_{\mu_n \in A}1_{\| a \|_{\infty}< \delta} \prod_{k=0}^n g(a_k) d\ell_{\CC}(a_0)\dots d\ell_{\CC}(a_n) \nonumber\\
& \geq e^{-\varepsilon n^2} \int 1_{\{\prod_{k=0}^{n}g(a_i)\geq e^{-\varepsilon n^2}\}}1_{\mu_n \in A}1_{\| a \|_{\infty}< \delta} d\ell_{\CC}(a_0)\dots d\ell_{\CC}(a_n) \nonumber\\
& \geq  e^{-\varepsilon n^2} \int 1_{\mu_n \in A}1_{\| a \|_{\infty}< \delta}d\ell_{\CC}(a_0)\dots d\ell_{\CC}(a_n) - e^{-(K+\varepsilon)n^2}. 
\end{align}
The integral $\int 1_{\mu_n \in A}1_{\| a \|_{\infty}< \delta}d\ell_{\CC}(a_0)\dots d\ell_{\CC}(a_n)$ is, up to a normalizing factor $(\pi \delta^2)^{n+1}$ which is of order $e^{O(n)}$, the probability that the empirical mesure of the zeros of a random polynomial with i.i.d. uniform coefficients on the disk $D(0,\delta)$ belongs in $A$.

\noindent Now we deal with this integral using the same techniques used for the upper bound:
\begin{align*}
\int 1_{\mu_n \in A}1_{\| a \|_{\infty}< \delta}&d\ell_{\CC}(a_0)\dots d\ell_{\CC}(a_n)   \\&= \int 1_{\mu_n \in A}\prod_{i<j}|z_i-z_j|^2 \int 1_{|a_n|\| \tilde{a} \|_{\infty}< \delta} |a_n|^{2n} d\ell_{\CC}(a_n)d\ell_{\CC^{n}}(z_1,\dots,z_n) \\ & = \int 1_{\mu_n \in A} \frac{\prod_{i<j}|z_i-z_j|^2}{\| \tilde{a} \|_{\infty}^{2n+2}}d\ell_{\CC^{n}}(z_1,\dots,z_n) \int |u|^{2n} 1_{|u|< \delta}d\ell_{\CC}(u) \\
  &=\frac{n!}{\pi^{n+1} }\int 1_{\mu_n \in A} \frac{\prod_{i<j}|z_i-z_j|^2}{\| \tilde{a} \|_{2}^{2n+2}}d\ell_{\CC^{n}}(z_1,\dots,z_n) \frac{\pi^{n+1} }{n!} \int |u|^{2n} 1_{|u|< \delta}d\ell_{\CC}(u) \\
& =\PP(\mu_n^{\CC} \in A) \frac{\pi^{n+1} }{n!} \int |u|^{2n} 1_{|u|< \delta}d\ell_{\CC}(u).
\end{align*}
Here we used the change of variables $u=\| \tilde{a} \|_{\infty} a_n$, using the fact that $\| \tilde{a} \|_{\infty}$ does not depend on $a_n$ and the inequality
 $\|.\|_{\infty} \leq \|.\|_2$ in
$\CC^{n+1}$.
Since
\[ \lim_{n \to \infty} \frac{1}{n^2} \log \left(\frac{\pi^{n+1} }{n!} \int |u|^{2n} 1_{|u|< \delta}d\ell_{\CC}(u)\right)  = 0, \]
we obtain
\[\liminf_{n \to \infty} \frac{1}{n^2} \log \left(\int 1_{\mu_n \in A}\prod_{i<j}|z_i-z_j|^2 \int 1_{|a_n|\| \tilde{a} \|_{\infty}< \delta} |a_n|^{2n} d\ell_{\CC}(a_n)d\ell_{\CC^{n}}(z_1,\dots,z_n)
\right)\geq -\inf_{\mathrm{int}A}I_{\CC}.\]
Combined with \eqref{eq-align} we obtain 
\[   \liminf_{n \to \infty} \frac{1}{n^2}\log \PP(\mu_n \in A)\geq -\varepsilon -\inf_{\mathrm{int}A}I_{\CC}.
\]
Taking the limit as $\varepsilon$ goes to zero completes
the proof of the lower bound.
\newline
\textbf{Real coefficients.} Let $E$ be $\RR$ or $\RR^+$.
The proof for real coefficients is essentially the same as for complex coefficients, except that the distribution of the roots is a mixture of measures instead of an absolutely continuous measure. We will apply the same ideas to each term of the mixture to obtain the upper and lower bound.
If the coefficients $a_k$'s are i.i.d. random variables with density $g$ with respect to the Lebesgue measure on $E$, then the distribution of the vector $(z_1,\dots,z_n,a_n)$ is given by:
\begin{align*}
\sum_{k=0}^{\lfloor n/2 \rfloor} \frac{2^k}{k!(n-2k)!} |a_n|^n & \prod_{i<j}|z_i-z_j| \prod_{k=0}^{n}g(a_i) d\ell_E(a_n)d\ell_{n,k}(z_1,\dots,z_n) \\ &= \sum_{k=0}^{\lfloor n/2 \rfloor}\frac{2^k}{k!(n-2k)!} p_{n,k}(z_1,\dots,z_n,a_n)d\ell_E(a_n)d\ell_{n,k}(z_1,\dots,z_n).
\end{align*}
Using exactly the same reasoning as in the complex case, we define $\theta_n^E$ as:
\begin{equation*}
\theta_n^E=\begin{cases}
\frac{\pi^{(n-1)/2}}{\Gamma(\frac{n+1}{2})} & \text{ if } E= \RR \\
\frac{1}{n!} & \text{ if } E= \RR^+
\end{cases}
\end{equation*}
and we notice that
\[\lim_{n \to \infty} \frac{1}{n^2}\log \theta_n^E = 0. \]
We obtain that for any $k$:
\[ \int p_{n,k}(z_1, \dots,z_n) d\ell_E(a_n) \leq  \frac{1}{\theta_n^E} \frac{ \prod_{i<j}|z_i-z_j|}{ (\int\prod_{i=1}^n |z-z_i|^2 d\nu_{S^1})^{(n+1)/2}} \theta_n^E.\]
This inequality implies that, for any Borel set $A \in \mathcal{ M}_1(\CC)$:
\begin{align*}
\PP(\mu_n \in A) & = \sum_{k=0}^{\lfloor n/2 \rfloor}\frac{2^k}{k!(n-2k)!} \int 1_{\mu_n \in A} \int p_{n,k}(z_1, \dots,z_n) d\ell_E(a_n) d\ell_{n,k}(z_1,\dots,z_n)\\
& \leq \theta_n^E \PP(\mu_n^E \in A).
\end{align*}
Using the large deviations principle for $(\mu_n^E)_{n \in \NN}$ ends the proof of the upper bound.

The proof of the lower bound is very similar to the complex case, we use the technical lemma to deal with i.i.d. uniform random variables on the disk $D(0,\delta)$.
\begin{align} \label{infreel}
\PP(\mu_n \in A) & = \int 1_{\mu_n \in A} \prod_{k=0}^n g(a_k) d\ell_E(a_0)\dots d\ell_E(a_n) \nonumber\\
& \geq \int 1_{\{\prod_{k=0}^{n}g(a_i)\geq e^{-\varepsilon n^2}\}}1_{\mu_n \in A}1_{\| a \|_{\infty}< \delta} \prod_{k=0}^n g(a_k) d\ell_E(a_0)\dots d\ell_E(a_n) \nonumber\\
& \geq  e^{-\varepsilon n^2} \int 1_{\mu_n \in A}1_{\| a \|_{\infty}< \delta}d\ell_E(a_0)\dots d\ell_E(a_n) - e^{-(K+\varepsilon)n^2}.
\end{align}
We transform this integral in order to compare it to one of the known cases.
\begin{align*}
\int 1_{\mu_n \in A}&1_{\| a \|_{\infty}< \delta} d\ell_E(a_0)\dots d\ell_E(a_n)\\ & = \sum_{k=0}^{\lfloor n/2 \rfloor}\frac{2^k}{k!(n-2k)!} \int 1_{\mu_n \in A}1_{\| a \|_{\infty}< \delta} |a_n|^n \prod_{i<j}|z_i-z_j|d\ell_E(a_n)d\ell_{n,k}(z_1,\dots,z_n) \\
& = \sum_{k=0}^{\lfloor n/2 \rfloor}\frac{2^k}{k!(n-2k)!} \int |u|^{n+1} 1_{|u|<\delta} d\ell_E(u)  \int 1_{\mu_n \in A}\frac{\prod_{i<j}|z_i-z_j|}{\| \tilde{a} \|_{\infty}^{n+1}} d\ell_{n,k}(z_1,\dots,z_n).
\end{align*}
If $E=\RR$, we use the inequality 
$ \|. \|_{\infty} \leq \|. \|_{2}$ 
on $\RR^{n+1}$
to obtain
\begin{align*}
\int & 1_{\mu_n \in A}  1_{\| a \|_{\infty}< \delta} d\ell_{\RR}(a_0)\dots d\ell_{\RR}(a_n) \\ & \geq \sum_{k=0}^{\lfloor n/2 \rfloor}\frac{2^k}{k!(n-2k)!} \int |u|^{n+1} 1_{|u|<\delta} d\ell_{\RR}(u)  \int 1_{\mu_n \in A}\frac{\prod_{i<j}|z_i-z_j|}{\| \tilde{a} \|_{2}^{n+1}} d\ell_{n,k}(z_1,\dots,z_n).
\end{align*} 
If $E=\RR^+$, we use the 
inequality $ \|. \|_{\infty} \leq \|. \|_{1}$ on $\RR^{n+1}$ to obtain
\begin{align*}
\int &1_{\mu_n \in A}  1_{\| a \|_{\infty}< \delta} d\ell_{\RR^+}(a_0)\dots d\ell_{\RR^+}(a_n) \\ & \geq \sum_{k=0}^{\lfloor n/2 \rfloor}\frac{2^k}{k!(n-2k)!} \int |u|^{n+1} 1_{|u|<\delta} d\ell_{\RR^+}(u)  \int 1_{\mu_n \in A}\frac{\prod_{i<j}|z_i-z_j|}{\| \tilde{a} \|_{1}^{n+1}} d\ell_{n,k}(z_1,\dots,z_n).
\end{align*} 
Note that the only difference between the case $\RR$ and the case $\RR^+$ 
is the reference norm employed.
Using \eqref{infreel} with the last two inequalities, we obtain that for any $\varepsilon>0$ fixed, we have:
\begin{align*}
\liminf_{n \to \infty} \frac{1}{n^2}\log \PP(\mu_n \in A) &\geq \liminf_{n \to \infty} \frac{1}{n^2}\log \PP(\mu_n^E \in A)+ \lim_{n \to \infty} \frac{1}{n^2}\log \theta_n^E - \varepsilon \\
&\geq -\inf_{\mathrm{int}A} I_E -\varepsilon.
\end{align*}
Taking the limit as $\varepsilon$ goes to zero ends the proof of the large deviations lower bound for the real and positive cases.
\end{proof}

\section{Concluding remarks and an open problem.}
We focused in this note
on Kac polynomials but we could as well 
study the universality of the large deviations for the zeros of
\[P_n=\sum_{k=0}^{n}a_k R_k\]
where the $R_k$'s are orthogonal polynomials satisfying the assumptions of regularity given in \cite{zeitounizelditch} and \cite{butez}. In this case, the distribution of the zeros can be computed (\cite[Theorem 5.1]{butez}) and, under the same hypotheses as in Theorem \ref{maintheorem}, the same large deviations principle as for Gaussian coefficients holds. Similar ideas apply to certain 
non i.i.d. models such as the $P(\phi)_2$ model of \cite{feng}.

A significant limitation of our approach is the use of the assumption
\eqref{inf} in Theorem \ref{maintheorem}. While it is possible that it can
be relaxed, we note that some assumption of this type is necessary for the 
universality resut. Indeed,
if the support of the distribution of the coefficients is inside an annulus, 
it follows from Jensen's formula, see
\cite{hughes}, that
$\mu_n$ converges deterministically towards $\nu_{S^1}$. Hence, no non-trivial 
LDP can hold in this case. 
An interesting test case is the case where the i.i.d. coefficients 
possess the density $|z|^{\alpha}1_{|z|<\delta}$ for some $\alpha >0$ and 
$\delta >0$. In that case,
the distribution of the zeros $(z_1,\dots,z_n)$ is absolutely continuous 
with respect to the Lebesgue measure on $\CC^n$ with density proportional to:
\begin{align*}
\prod_{i<j}|z_i-z_j|^2 \int |a_n|^{2n} \prod|a_i|^{\alpha} 1_{\|a\|_{\infty}< \delta} d\ell_{\CC}(a_n)=  \frac{\prod_{i<j}|z_i-z_j|^2}{\|\tilde{a}\|_{\infty}^{2n+2+n\alpha}} \prod_{k=0}^n \frac{|a_k|}{|a_n|}.
\end{align*}
If we are able to prove that the term 
$\prod_{k=0}^n \frac{|a_k|}{|a_n|}$ does not contribute to the 
large deviations, then a LDP at speed $n^2$ would follow
with rate function
\[I_{\alpha}(\mu)=  -\iint \log|z-w|d\mu(z)d\mu(w) + (2+\alpha)\sup_{z \in S^1} \int \log|z-w|d\mu(w).\] 
In particular, we do not expect universality in that case. We have not been able to carry out the analysis of this setup.

\bibliographystyle{alpha} % D'autres styles sont disponibles. Notez que les distributions LaTeX n'incluent généralement pas de styles de bibliographies francisés ; vous aurez donc des bibliographies en anglais.
\bibliography{biblio} % Remplacer "biblio" par le nom de votre fichier de références bibliographiques.

\end{document}